\newcommand{\ov}[1]{\overline{#1}}
\newcommand{\pref}[1]{\textup{(\ref{#1})}}
\newtheorem{thrm}{Theorem}
\newtheorem{thm}[thrm]{Theorem}
\newtheorem{definition}[thrm]{Definition}
\newtheorem{df}[thrm]{Definition}		
\newtheorem{cor}[thrm]{Corollary}
\newtheorem{rmk}[thrm]{Remark}
\newtheorem{exam}[thrm]{Example}
\newcommand{\R}{{\mathbb{R}}}
\newcommand{\C}{{\mathbb{C}}}
\newcommand{\N}{{\mathbb{N}}}
\title{on composition of muiltivariable formal power series}
\keywords{Composition, Chain Rule, Formal analysis}
\author{Motaz Mokatren}
\address[M. Mokatren]{Department of Electrical and Electronics Engineering\\
	Kinneret College\\
	Zemach, Emek HaYarden Mobile Post 15132\\
	Israel}
\subjclass[2010]{Primary: 13F25; Secondary: 13J05}
\email[M. Mokatren]{motaz\_mok@mx.kinneret.ac.il}
\begin{document}

\maketitle
\sloppy
\begin{abstract}
The paper  provides a necessary and sufficient condition for the composition of multivariable formal power series and present the Generalized Chain Rule for formal power series of multiple variables.
\end{abstract}
\section{Introduction}
Over the years, the study of formal power series composition, or functional composition, has remained a compelling area of research for mathematicians.\\
In 2002, Gan and Knox \cite{GanKnox} established a necessary and sufficient condition for the composition of formal power series of one variable. Building on this, in 2007, Gan introduced the Generalized Chain Rule for formal power series of one variable (see \cite{Gan1}). In 2022, Bugajewski, Galimberti, and Maćkowiak extended the theory by proving a necessary and sufficient condition for the composition of formal power series where the outer series is one variable and the inner series is multivariable, as presented in \cite{DAP}. Expanding further on this framework, in 2023, Gan, Bugajewski, and Maćkowiak introduced the Chain Rule for the composition of one variable formal power series with multivariable formal power series (see Lemma 5.2 of \cite{JCP}).\\
This paper presents new results on key aspects of the theory of multivariable formal power series: a necessary and sufficient condition for the existence of the composition of multivariable formal power series, and the Generalized Chain Rule for multivariable formal power series.\\
Composition plays a crucial role in solving ordinary and partial differential equations (see \cite{ode}). One notable application of the composition of formal power series is in the construction of Riordan matrices (see \cite{SH}).\\
The paper is organized as follows. The next section sets notation and gives basic definitions. Section 3 presents the proof of the necessary and sufficient condition for the existence of the composition of multivariable formal power series , and the Generalized Chain Rule for multivariable formal power series. Section 4 presents conclusions and further work.
\section{Definitions, conventions and some basic facts}
In this section we are going to collect some basic definitions and results which will be needed in
the sequel.\\
Symbol $\N$ denotes the set of all positive integers and $\N_0:=\N\cup\{0\}$. Symbol $ \mathbb{Z}$ denotes the set of all integers. Let $\mathbb{F}$ stands for the field $\R$ or $\C$. If $n\in \N$, then $[n]:=\{1,\ldots,n\}$. Let us fix $n\in{\mathbb{N}}$ and denote by
$\hat{\alpha}_k$ the set of all nonnegative
integer solutions $\alpha_1,\ldots,\alpha_n$ of $\alpha_1+\ldots+\alpha_n=k$ for $k\in{\mathbb{N}_0}$, that is
$$
\hat{\alpha}_k:=\{\alpha=(\alpha_1,\ldots, \alpha_n)\in{\mathbb{N}_{0}^n}:\, \alpha_1+\ldots+\alpha_n=k\}.
$$
For each $\alpha\in{\N^{n}_0}$ there is exactly one $k\in \N_0$ for which $\alpha\in{\hat{\alpha}_k}$, we denote such a $k$ by $k(\alpha)$.
\begin{rmk}
(\cite{S}, p.25) The number of elements of $\hat{\alpha}_k$ is $\binom{n+k-1}{n-1}$.
\end{rmk}
We say that $\alpha\in{\N_0^n}$ is lexicographically less (greater) than $\beta\in{\N_0^n}$, $\alpha\neq \beta$, if $\alpha_i<\beta_i$ ($\alpha_i>\beta_i$) for the  first coordinate $i\in [n]$ for which $\alpha_i\neq  \beta_i$.
\begin{df}
 (\cite{GanM2})
Let $\mathbb{F}$ be a field and $\mathbb{F}[X]=\mathbb{F}[x_1,...,x_n]$ be the polynomial ring of n variables, such that each monomial in the ring is  $X^{\alpha}=x^{\alpha_1}_1x^{\alpha_2}_2...x^{\alpha_n}_n$, where
$\alpha=(\alpha_1,....,\alpha_n) \in \mathbb{N}_{0}^n $. A formal power series, briefly (fps) of n variables is defined as
\begin{equation}
  f=\sum_{\alpha}a_\alpha X^{\alpha}=\sum_{\alpha_1,....,\alpha_n \geq 0} {a_{(\alpha_1,....,\alpha_n)}{x^{\alpha_1}_1x^{\alpha_2}_2...x^{\alpha_n}_n }},
 \end{equation}
  where $a_\alpha$ is a map from $\mathbb{N}_0^n$ to $\mathbb{F}$.
\end{df}
\begin{rmk}
For a fps of one variable, we simply identify $X=(x_1)$ with $x$, where $x$ is the name of the variable, so we write $x^n$ instead of $X^{n}$.
\end{rmk}
\begin{df}
The degree of each monomial $X^{\alpha}=x^{\alpha_1}_1x^{\alpha_2}_2...x^{\alpha_n}_n$ in the ring $\mathbb{F}[x_1,...,x_n]$ is defined as $|\alpha|=\sum_{i=1}^n{\alpha_i}$.
\end{df}
Moreover, the multiplication in $\mathbb{F}[X]$ is defined as follows.
\begin{df}
Let  $f=\sum_{\alpha}a_\alpha X^{\alpha}$, $g=\sum_{\beta}b_{\beta} X^{\beta}$ be two fps. Then
$$fg=\sum_{\alpha}(\sum_{\beta,\gamma:\; \gamma+\beta=\alpha }a_{\beta}b_{\gamma})X^{\alpha}.$$
\end{df}
Actually, it is the Cauchy product of two formal power series:
\begin{equation}
fg=\sum_{\alpha}(\sum_{\beta}a_{\beta}b_{\alpha-\beta})X^{\alpha}.
\end{equation}
\begin{thm}
(\cite{Hunk2019})
The set of all formal power series forms an integral domain with respect to the addition and the multiplication.
\end{thm}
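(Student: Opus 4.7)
The plan is to establish two things: that $(\mathbb{F}[[X]],+,\cdot)$ is a commutative ring with unity, and that it has no nonzero zero divisors. The first part is essentially bookkeeping from the definitions already given, while the second is the genuine content of the theorem and requires the lexicographic order introduced just before the definition of fps.

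First I would verify the ring structure. Associativity and commutativity of addition, and the existence of additive inverses and the zero element (the fps with all coefficients zero), are inherited coefficient-wise from $\mathbb{F}$. The multiplicative identity is the fps whose only nonzero coefficient is $a_{(0,\ldots,0)}=1$. Commutativity of the Cauchy product follows by reindexing $\beta\mapsto \alpha-\beta$ in the inner sum of~(2). Associativity $(fg)h=f(gh)$ and distributivity $f(g+h)=fg+fh$ are routine manipulations of the Cauchy product: both sides, when one computes the coefficient of $X^{\alpha}$, yield the triple sum $\sum_{\beta+\gamma+\delta=\alpha}a_\beta b_\gamma c_\delta$, respectively the equality of $\sum_{\beta+\gamma=\alpha}a_\beta(b_\gamma+c_\gamma)$ with the sum of the two individual Cauchy products. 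I would state these verifications briefly rather than expand every sum.

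The heart of the proof is the absence of zero divisors. Let $f=\sum_\alpha a_\alpha X^\alpha$ and $g=\sum_\beta b_\beta X^\beta$ be nonzero, and let $\alpha^\ast$ be the lexicographically smallest $\alpha\in\N_0^n$ with $a_{\alpha^\ast}\neq 0$, and similarly $\beta^\ast$ for $g$. I claim that the coefficient of $X^{\alpha^\ast+\beta^\ast}$ in $fg$ equals $a_{\alpha^\ast}b_{\beta^\ast}\neq 0$. To prove the claim, take any pair $(\alpha,\beta)$ with $\alpha+\beta=\alpha^\ast+\beta^\ast$ and $(\alpha,\beta)\neq(\alpha^\ast,\beta^\ast)$. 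If $\alpha$ is lexicographically smaller than $\alpha^\ast$, then $a_\alpha=0$ by minimality. Otherwise $\alpha$ is lexicographically greater than $\alpha^\ast$, so there is a first coordinate $i$ with $\alpha_i>\alpha^\ast_i$ and $\alpha_j=\alpha^\ast_j$ for $j<i$; then from $\alpha+\beta=\alpha^\ast+\beta^\ast$ we get $\beta_j=\beta^\ast_j$ for $j<i$ and $\beta_i=\beta^\ast_i-(\alpha_i-\alpha^\ast_i)<\beta^\ast_i$, so $\beta$ is lexicographically smaller than $\beta^\ast$ and hence $b_\beta=0$. In every case the corresponding summand in the Cauchy product vanishes, leaving only $a_{\alpha^\ast}b_{\beta^\ast}$, which is nonzero because $\mathbb{F}$ is a field.

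The main obstacle, and the reason I would not simply mimic the one-variable argument that compares total degrees, is precisely the possibility of cancellation among different monomials of the same total degree when $n>1$: the naive lowest-degree argument gives $\mathrm{ord}(fg)\geq\mathrm{ord}(f)+\mathrm{ord}(g)$ but not equality. Passing from total degree to lexicographic order removes this obstruction, since the lexicographic order on $\N_0^n$ is a total order compatible with addition. Once this observation is in place, the no-zero-divisor step is a short calculation as above.
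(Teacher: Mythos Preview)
The paper does not supply its own proof of this statement; it is quoted as a known result from \cite{Hunk2019}. So there is nothing in the paper to compare against, and I will simply assess your argument on its own.

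Your proof is correct. The ring axioms are routine, as you say. For the absence of zero divisors, the essential point is that the pure lexicographic order on $\N_0^n$ is a well-ordering (so lex-minimal supports $\alpha^\ast,\beta^\ast$ exist) and is compatible with addition (so from $\alpha+\beta=\alpha^\ast+\beta^\ast$ and $\alpha>_{\mathrm{lex}}\alpha^\ast$ one deduces $\beta<_{\mathrm{lex}}\beta^\ast$). You verify the compatibility explicitly at the first differing coordinate, which is exactly right. The conclusion $a_{\alpha^\ast}b_{\beta^\ast}\neq 0$ then uses only that $\mathbb{F}$ has no zero divisors.

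Your remark that the total-degree (order) argument fails in several variables is also apt: one only gets $\mathrm{ord}(fg)\geq\mathrm{ord}(f)+\mathrm{ord}(g)$ a priori, and something like a monomial order is needed to isolate a single surviving term. Any monomial order (graded lex, reverse lex, etc.) would work equally well here; your choice of pure lex matches the order the paper has already introduced.
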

Let $\theta:=(0,...,0)$ and $e_{i}=(0,..,1,..,0)$ be an $n$-tuple vector, with $1$ on the $i^{th}$ place and zero otherwise.
\begin{df}
A fps $f=\sum_{\alpha}a_{\alpha}X^{\alpha}$ is said to be a unit, if $a_{\theta}\neq{0}$, and a nonunit, if $a_{\theta}=0$.
\end{df}
\begin{df}
Let $f=\sum_{\alpha}a_\alpha X^{\alpha}$ be a fps of n variables and $k\in{\N_0}$, the $k$-th block of $f$ is a fps $f[k]$ of n variables whose coefficients are given by $f[k]_\alpha:=a_\alpha$, $\alpha\in{\hat{\alpha}_k},$ and $f[k]_\alpha:=0$, otherwise.
\end{df}
\begin{rmk}
By the Multinomial Theorem we get $(f[0]+\ldots +f[k])^n=\underbrace{(f[0]+ \ldots + f[k])\ldots (f[0]+ \ldots + f[k])}_{n\times}=\sum \frac{n!}{v_0! \ldots  v_k!}f[0]^{v_0}\ldots f[k]^{v_k}$, where $k:=k(\alpha)$ and the sum extends over nonnegative integer solutions of $v_0+v_1+\ldots+v_k=n$. Notice that the only terms of the sum that contribute to the $k$-th block of $f^n$, that is, $f^n[k]$ are those for which $v_1+2v_2+\ldots+kv_k=k$. Therefore, it follows for every $\alpha\in{\N^{n}_{0}}$ \begin{equation}\label{eqn:powers} f^n_\alpha=
\sum \frac{n!}{v_0! \ldots  v_k!}(f[0]^{v_0}\ldots f[k]^{v_k})_\alpha,\end{equation} where $k=k(\alpha)$ and the sum extends over nonnegative integer solutions of the system \begin{equation}\label{eqn:cndtns}\left\{\begin{array}{l}v_0+v_1+\ldots+v_k=n\\v_1+2v_2+\ldots+kv_k=k.\end{array}\right.\end{equation}
\end{rmk}
\begin{df}
(\cite{Hunk2019})
Let $ f=\sum_{\alpha}a_\alpha X^{\alpha}=\sum_{\alpha_1,....,\alpha_n \geq 0} {a_{(\alpha_1,....,\alpha_n)}{x^{\alpha_1}_1x^{\alpha_2}_2...x^{\alpha_n}_n }}$ be a power series of n variables. The formal partial
derivative of $f$ with respect to $x_j$ is defined as:
$$D_j(f)=\sum_{\alpha_1,....,\alpha_n \geq 0} {(\alpha_j+1)a_{(\alpha_1,...,\alpha_{j-1},\alpha_j+1,\alpha_{j+1}...,\alpha_n)}{x^{\alpha_1}_1x^{\alpha_2}_2...x^{\alpha_n}_n }}.$$
\end{df}
The m-th derivative is defined in the natural way and is denoted by $D_j^m(f)$.
\begin{thm} (\cite{Hunk2019})
It holds:
$$D_j(f+g)=D_j(f)+D_j(g),$$
$$D_j(fg)=D_j(f)g+fD_j(g),$$
$$D_j(f^m)=mf^{m-1}D_j(f),m\in{\mathbb{N}},$$
\end{thm}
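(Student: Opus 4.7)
The plan is to verify each of the three identities coefficient-by-coefficient, following the strategy used for polynomials but taking care with the indexing in $\N_0^n$.

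For additivity, the argument is immediate from the definitions: the coefficient of $X^\alpha$ in $f+g$ is $a_\alpha+b_\alpha$, so the coefficient of $X^\alpha$ in $D_j(f+g)$ is $(\alpha_j+1)(a_{\alpha+e_j}+b_{\alpha+e_j})$, which splits as the sum of the corresponding coefficients of $D_j(f)$ and $D_j(g)$. No subtle reindexing is involved here.

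The product rule is the heart of the theorem. I would fix $\alpha\in\N_0^n$ and compute the coefficient of $X^\alpha$ on each side of $D_j(fg)=D_j(f)g+fD_j(g)$. By the Cauchy product formula combined with the definition of $D_j$, the left-hand coefficient equals
\[
(\alpha_j+1)\sum_{\beta+\gamma=\alpha+e_j}a_\beta b_\gamma.
\]
On the right, the coefficients of $X^\alpha$ in $D_j(f)g$ and $fD_j(g)$ are
\[
\sum_{\beta+\gamma=\alpha}(\beta_j+1)a_{\beta+e_j}b_\gamma\quad\text{and}\quad\sum_{\beta+\gamma=\alpha}(\gamma_j+1)a_\beta b_{\gamma+e_j}
\]
respectively. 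I would then substitute $\beta'=\beta+e_j$ in the first sum and $\gamma'=\gamma+e_j$ in the second; after relabeling, both sums are taken over pairs whose sum is $\alpha+e_j$, and the combined coefficient of $a_\beta b_\gamma$ becomes $\beta_j+\gamma_j=\alpha_j+1$, matching the left-hand side. The main obstacle is purely bookkeeping: one must confirm that the reindexing captures exactly the terms with $\beta_j\ge 1$ (resp.\ $\gamma_j\ge 1$), and that extending back to all decompositions of $\alpha+e_j$ introduces no spurious terms, since $\beta_j=0$ contributes $0$ to the corresponding weighted sum anyway.

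The power rule then follows by induction on $m$. The base case $m=1$ is trivial. For the inductive step, I would write $f^{m+1}=f\cdot f^m$ and apply the already-established product rule, obtaining
\[
D_j(f^{m+1})=D_j(f)f^m+fD_j(f^m)=f^mD_j(f)+f\cdot mf^{m-1}D_j(f)=(m+1)f^mD_j(f),
\]
which closes the induction. No further obstacles arise here, since commutativity of the multiplication in the integral domain of formal power series (cited in the excerpt) allows the factors to be rearranged freely.
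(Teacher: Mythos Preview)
Your argument is correct and follows the standard route: linearity is immediate, the Leibniz rule is checked by matching the coefficient of $X^\alpha$ on both sides via the substitution $\beta'=\beta+e_j$ (respectively $\gamma'=\gamma+e_j$), and the power rule is then a straightforward induction on $m$ using the product rule just established. The bookkeeping point you flag---that the reindexed sums naturally extend to all decompositions of $\alpha+e_j$ because the ``missing'' terms carry a zero weight---is handled correctly.

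As for comparison with the paper: there is nothing to compare. The paper does not prove this theorem; it simply quotes it from Haukkanen~\cite{Hunk2019} and moves on. Your write-up therefore supplies a self-contained verification that the paper omits.
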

\begin{thm}
  Let $f_1,f_2,...,f_n$ be fps of n variables. Then
  $$D_j(\prod_{i=1}^{n}f_i)=\sum_{i=1}^n{D_j(f_i)\prod_{l\neq{i}}f_l}.$$
\end{thm}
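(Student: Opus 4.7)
The plan is to prove the statement by induction on the number of factors in the product, using the two-factor Leibniz rule
\[
D_j(fg)=D_j(f)g+fD_j(g)
\]
recorded in the preceding theorem as the base case. The inductive mechanism is the standard one for the generalized Leibniz rule and does not rely on any structural property of formal power series beyond the additivity and bilinearity of $D_j$ already established.

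First, for $n=1$ the identity reduces to $D_j(f_1)=D_j(f_1)$ and for $n=2$ it is exactly the two-factor product rule of the previous theorem. For the inductive step, assume the identity
\[
D_j\Bigl(\prod_{i=1}^{m-1}f_i\Bigr)=\sum_{i=1}^{m-1}D_j(f_i)\prod_{\substack{l=1\\ l\neq i}}^{m-1}f_l
\]
holds for any $m-1$ formal power series of $n$ variables. Writing $\prod_{i=1}^{m}f_i=\bigl(\prod_{i=1}^{m-1}f_i\bigr)f_m$ and applying the two-factor rule yields
\[
D_j\Bigl(\prod_{i=1}^{m}f_i\Bigr)=D_j\Bigl(\prod_{i=1}^{m-1}f_i\Bigr)f_m+\Bigl(\prod_{i=1}^{m-1}f_i\Bigr)D_j(f_m).
\]

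Then I would substitute the inductive hypothesis into the first summand, absorb the trailing $f_m$ into each of the products $\prod_{l\neq i,\, l\le m-1}f_l$ to extend them to $\prod_{l\neq i,\, l\le m}f_l$, and recognize the second summand as the missing $i=m$ term of the desired sum. This gives
\[
D_j\Bigl(\prod_{i=1}^{m}f_i\Bigr)=\sum_{i=1}^{m}D_j(f_i)\prod_{\substack{l=1\\ l\neq i}}^{m}f_l,
\]
completing the induction. Specializing $m=n$ yields the theorem.

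There is no real obstacle here: the only thing that requires care is the index bookkeeping when merging the new factor $f_m$ into each product $\prod_{l\neq i}f_l$ to promote the sum over $i\le m-1$ into a sum over $i\le m$. Everything else is routine application of additivity of $D_j$ and the two-factor Leibniz rule.
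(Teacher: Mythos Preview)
Your proof is correct and follows exactly the approach the paper indicates: the paper simply states that the proof is trivial by mathematical induction on $n$, and you have carried out that induction explicitly using the two-factor Leibniz rule from the preceding theorem.
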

\begin{proof}
The proof is trivial by the mathematical induction in view of n.
\end{proof}
 We use the conventions that $\prod_{i\in{\emptyset}}:=1$ and $\sum_{i\in \emptyset}:=0$. Let us denote $I=(x_1,x_2,...,x_n)$ to be the identity of the composition.

\section{ The composition of two formal power series}
First, we recall the composition of a fps of one variable and with a fps of several variables.
\begin{definition}(\cite{DAP})\label{df:composition}
For $f=\sum_{\alpha}a_\alpha X^{\alpha}$ , and  $g(x)=\sum_{n=0}^{\infty}b_nx^n$, the composition of $g$ with $f$ is a fps of several variables $h=g\circ f$ defined by
$$h:=\sum_{\alpha}\underbrace{\left(\sum_{n=0}^\infty b_n a^{n}_\alpha\right)}_{h_\alpha:=}X^\alpha,$$
provided that the coefficients $h_\alpha$ exist, that is, if the series defining $h_\alpha$ converge for every $\alpha$.
\end{definition}
\begin{rmk}
Theorem 3.1 of \cite{DAP}, provides a necessary and sufficient condition for the existence of such a composition.
\end{rmk}
The following definitions are vital for our paper.
\begin{df}
For each n-tuple of nonnegative integers $\alpha=(\alpha_1,\alpha_2,...,\alpha_n)$ and $G=(g_1,g_2,...,g_n)$  - a vector of fps with n variables over $\mathbb{F}$, we define a monomial of fps as:
$$G^\alpha=g_1^{\alpha_1}g_2^{\alpha_2}\cdots{}g_n^{\alpha_n}.$$
\end{df}

\begin{df}
\cite{haz}
For $\alpha=(\alpha_1,\ldots, \alpha_n)\in{\mathbb{N}_{0}^n}$. Let $f=\sum_{\alpha}a_\alpha X^{\alpha}$  and  $G=(g_1,...,g_n)$ be a vector of fps of n variables over $\mathbb{F}$, every
component of which is $g_i=\sum_{\alpha}b_\alpha^iX^{\alpha}$. The formal composition of $f$ with $G$, if it exists, is a fps of several variables $h=f\circ G$ defined by
$$h=f\circ{G}=f\circ{(g_1,...,g_n)}=\sum_{\alpha}a_\alpha G^{\alpha}=\sum_{\alpha_1,....,\alpha_n \geq 0} {a_{(\alpha_1,....,\alpha_n)}{g^{\alpha_1}_1g^{\alpha_2}_2...g^{\alpha_n}_n }}.$$
We distinguish the trivial case where all $g_i = 0$ and define the composition as
$$h=f\circ{\theta}=1.$$
\end{df}
\begin{df}
\cite{haz}
Let $F=(f_1,...,f_m)$ and $G=(g_1,...,g_n)$ be two vectors of fps of n variables over $\mathbb{F}$, where $f_i=\sum_{\alpha}a^{i}_\alpha X^{\alpha}$, and
$g_i=\sum_{\alpha}b_\alpha^iX^{\alpha}$. The formal composition of $G$ with $F$, if it exists, is a vector of fps of several variables $H=F\circ G$ defined by
$$H=F\circ{G}=(f_1\circ{G},...,f_n\circ{G}).$$
\end{df}
\begin{rmk}
The composition of multivariable formal power series cannot be composed component by component. However, partial composition is possible for multivariable functions. For more details, see \cite{ber}.\\
Example:\\
Let $f=\frac{1}{1-x_1x_2}$,
and $g_1=1-x_1$,$g_2=1-x_2$.
Both $f\circ(g_1,x_2)$ and $f\circ(x_1,g_2)$ exist but $f\circ(g_1,g_2)$ doesn't.
\end{rmk}
The following is the main result of this section, which generalizes both the one variable case and the case where the outer series is one variable and the inner series is multivariable.
\begin{thm}
For $\alpha=(\alpha_1,\ldots, \alpha_n)\in{\mathbb{N}_{0}^n}$. Let $f=\sum_{\alpha}a_\alpha X^{\alpha}$  and  $G=(g_1,...,g_n)$ be a vector of fps of n variables over $\mathbb{F}$, every
component of which is $g_i=\sum_{\alpha}b_\alpha^iX^{\alpha}$ and is not a constant fps. The formal composition of $f$ with $G$ exists if and only if
\begin{equation}\label{comp}
\lim_{|\beta|\rightarrow{}\infty}\sum_{|\alpha|\geq{}|\beta|}{\binom{\alpha_1}{\beta_1}\binom{\alpha_2}{\beta_2}\cdots{}\binom{\alpha_n}{\beta_n}a_{\alpha}(b_{\theta}^{1})^{\alpha_1-\beta_1}(b_{\theta}^{2})^{\alpha_2-\beta_2}\cdots{}(b_{\theta}^{n})^{\alpha_n-\beta_n}}\in{\mathbb{F}},
\end{equation}
for $\beta=(\beta_1,...\beta_n)\in{\N_0^n}$. If $|\alpha|\geq |\beta|$, it follows that $\alpha_i\geq \beta_i$ for every $i\in [n]$.
\end{thm}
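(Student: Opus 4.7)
The plan is to reorganise the composition via a binomial expansion around the basepoint $(b^1_\theta,\ldots,b^n_\theta)$. Writing each $g_i=b^i_\theta+\tilde g_i$ with $\tilde g_i$ of positive order, and setting $\tilde G^\beta:=\tilde g_1^{\beta_1}\cdots\tilde g_n^{\beta_n}$, the binomial theorem applied factor-by-factor in $G^\alpha=\prod_{i=1}^n(b^i_\theta+\tilde g_i)^{\alpha_i}$ produces the purely algebraic identity
\[
G^\alpha=\sum_{\beta\leq\alpha}\binom{\alpha_1}{\beta_1}\cdots\binom{\alpha_n}{\beta_n}(b^1_\theta)^{\alpha_1-\beta_1}\cdots(b^n_\theta)^{\alpha_n-\beta_n}\,\tilde G^\beta.
\]
Since each $\tilde g_i$ has zero constant term, $\tilde G^\beta$ has order at least $|\beta|$, so for any fixed $\gamma\in\N_0^n$ the coefficient $(\tilde G^\beta)_\gamma$ vanishes whenever $|\beta|>|\gamma|$. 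Writing $S(\beta)$ for the series in \pref{comp}, the coefficient of $X^\gamma$ in $f\circ G$ thus reduces, formally, to the finite linear combination
\[
\sum_\alpha a_\alpha(G^\alpha)_\gamma\;=\;\sum_{|\beta|\leq|\gamma|}(\tilde G^\beta)_\gamma\,S(\beta).
\]

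The sufficiency direction is immediate from this: if every $S(\beta)\in\mathbb{F}$, the outer sum is finite and each inner series converges by assumption, so the right-hand side is a well-defined element of $\mathbb{F}$ for each $\gamma$, exhibiting $f\circ G$ as a formal power series.

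For necessity I would invoke the majorant form of the identity. Applying the same binomial expansion to $|G|^\alpha:=\prod_i(|b^i_\theta|+|\tilde g_i|)^{\alpha_i}$, where $|\tilde g_i|$ denotes the fps whose coefficients are the moduli of those of $\tilde g_i$, yields the term-wise inequality
\[
(|G|^\alpha)_\gamma\;\geq\;\binom{\alpha_1}{\beta_1}\cdots\binom{\alpha_n}{\beta_n}\,|b^1_\theta|^{\alpha_1-\beta_1}\cdots|b^n_\theta|^{\alpha_n-\beta_n}\,(|\tilde G|^\beta)_\gamma
\]
for any $\beta\leq\alpha$. Since each $\tilde g_i$ is nonzero, the integral-domain property of the ring of formal power series (recorded in the excerpt from \cite{Hunk2019}) forces $|\tilde G|^\beta\neq0$, so one can fix a $\gamma$ with $(|\tilde G|^\beta)_\gamma>0$. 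Dividing through and summing in $\alpha$ then bounds the absolute series defining $S(\beta)$ by a constant times $\sum_\alpha|a_\alpha|(|G|^\alpha)_\gamma$, which is finite because $f\circ G$ is assumed to exist (in the natural absolute-convergence sense). Absolute convergence of $S(\beta)$ follows.

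The main obstacle I foresee is aligning the notion of existence of $f\circ G$ with the absolute convergence used in the majorant bound: the paper's definition only asks that each coefficient series converge, and if that is understood conditionally in some fixed ordering the clean majorant argument breaks down. In that case I would fall back on an induction on $|\beta|$, or on the refined statistic $\Omega(\beta):=\sum_i\beta_i\,\textup{ord}(\tilde g_i)$, using the polynomial truncations $f^{(N)}:=\sum_{|\alpha|\leq N}a_\alpha X^\alpha$ and inverting the linear system for the unknowns $\lim_N S_N(\beta')$ read off from the identity $(f^{(N)}\circ G)_\gamma=\sum_{|\beta'|\leq|\gamma|}(\tilde G^{\beta'})_\gamma\,S_N(\beta')$ at suitably chosen $\gamma$. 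The delicate point there is ensuring the system can be inverted even when the leading forms of the $\tilde g_i$ are linearly dependent, which may require drawing equations from several levels $|\gamma|$ and again leaning on the integral-domain property of the fps ring.
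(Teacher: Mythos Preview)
Your sufficiency argument is correct and is essentially the paper's argument, only packaged more cleanly: the paper derives the same finite linear combination of the partial sums of the $S(\beta)$ via the multinomial expansion into homogeneous blocks, whereas your one-line binomial expansion $G^\alpha=\sum_{\beta\le\alpha}\binom{\alpha}{\beta}(b_\theta)^{\alpha-\beta}\tilde G^\beta$ gets there directly. No issue here.

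The necessity direction, however, has a real gap. You yourself note that the majorant argument requires absolute convergence of the coefficient series, while the paper's definition of ``$f\circ G$ exists'' is only that each coefficient series converges as an iterated limit; so that argument does not apply. Your fallback---induction on $|\beta|$ (or on $\Omega(\beta)$) and ``inverting the linear system at suitably chosen $\gamma$''---is the right strategy, and is exactly what the paper does, but you stop precisely at the point that needs an idea: how to choose $\gamma$ so that the system is genuinely triangular rather than merely square. Saying you will ``lean on the integral-domain property'' is not yet a proof, because the matrix $\bigl((\tilde G^\beta)_\gamma\bigr)_{\beta,\gamma}$ need not be invertible for a generic choice of $\gamma$'s, especially when the leading forms of the $\tilde g_i$ are dependent, as you observe.

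The missing device is this. Let $l_i=\mathrm{ord}(\tilde g_i)$ and let $\bar\alpha^{(i)}$ be the lexicographically minimal multi-index in the support of $g_i[l_i]$. Then for any $m\in\N$ one has $\bigl(g_i[l_i]^{\,m}\bigr)_{m\bar\alpha^{(i)}}=(b^i_{\bar\alpha^{(i)}})^{m}\neq 0$, while every contribution to $\tilde g_i^{\,s}$ with $s>m$ has order strictly above $ml_i$ and hence vanishes at $m\bar\alpha^{(i)}$. Working coordinate by coordinate (the paper does a double induction in the two-variable case), evaluating the coefficient identity at $\gamma=(k_i+1)\bar\alpha^{(i)}$ isolates the single new unknown $S(\beta)$ with $\beta_i=k_i+1$ against a nonzero scalar, all other terms being already controlled by the induction hypothesis. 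This is what turns your sketched linear system into a one-step back-substitution and completes the necessity proof.
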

\begin{proof}
For simplicity, the proof is presented for the two variable case, which, in a sense, is analogous to the n variable case.\\
Let $G=(g_1,g_2)$ be a vector of fps of two variables. Assume that $f\circ{G}$ is well defined. This implies that
$$\lim_{\beta1,\beta_2\to{\infty}}\sum_{\alpha_1=0}^{\beta_1}\sum_{\alpha_2=0}^{\beta_2}{a_{(\alpha_1,\alpha_2)}g_1^{\alpha_1}g_{2}^{\alpha_2}}\in{\mathbb{F}}.$$
Thus, for every $\gamma\in{\N^{2}_{0}}$ and for every $k_1,k_2\in \N_0$, the following limit holds:
\begin{equation}\label{eqn:lim}
\lim_{\beta_1,\beta_2\to \infty}\sum_{\alpha_1=0}^{\beta_1}\sum_{\alpha_2=0}^{\beta_2}a_{(\alpha_1,\alpha_2)}(w[k_1]^{\alpha_1})_{\gamma}(w[k_2]^{\alpha_2})_{\gamma}\in{\mathbb{F}},\end{equation}
where $w[k_1]^{\alpha_1}$ and $w[k_2]^{\alpha_2}$ are defined as
$$w[k_1]^{\alpha_1}:=\sum \frac{\alpha_1!}{v_0! \ldots  v_{k_1}!}g_1[0]^{v_0}\ldots g_1[k_1]^{v_{k_1}}$$
and
$$w[k_2]^{\alpha_2}:=\sum \frac{\alpha_2!}{v_0! \ldots  v_{k_2}!}g_2[0]^{v_0}\ldots g_2[k_2]^{v_{k_2}},$$
where the sums are taken over the sets of nonnegative integer solutions $v=(v_0,v_1,\ldots,v_{k_1})$ and $v=(v_0,v_1,\ldots,v_{k_2})$, respectively, to the system (4).
 Notice that for each $k_1,k_2\in \N_0$, the existence of the limit \pref{eqn:lim} is equivalent to the existence of the limit
 \begin{equation}\label{eqn:lim1}\lim_{\beta_1,\beta_2\to \infty}\sum_{\alpha_1=k_1}^{\beta_1}\sum_{\alpha_2=k_2}^{\beta_2}a_{(\alpha_1,\alpha_2)}(w[k_1]^{\alpha_1})_{\gamma}(w[k_2]^{\alpha_2})_{\gamma}\in{\mathbb{F}}.
\end{equation}
Let us denote $w_{k_1,k_2}^{\beta1,\beta_2}(x_1,x_2):=\sum_{\alpha_1=k_1}^{\beta_1}\sum_{\alpha_2=k_2}^{\beta_2}a_{(\alpha_1,\alpha_2)}(w[k_1]^{\alpha_1})_{\gamma}(w[k_2]^{\alpha_2})_{\gamma}(x_1,x_2)$.
If $k_1=k_2=0$, then the given limit is equivalent to the existence of the limit $\lim_{\beta_1,\beta_2\to \infty}\sum_{\alpha_1=0}^{\beta_1}\sum_{\alpha_2=0}^{\beta_2}a_{(\alpha_1,\alpha_2)}(b_{\theta}^{1})^{\alpha_1}(b_{\theta}^{2})^{\alpha_2}$.\\
Let $k\in \N_0$ and $f$ be a fps of n variables. Define
$$R^n_k:=\left\{(v_0,\ldots,v_k)\in \N^{k+1}_0:\, \sum_{i=0}^kv_i=n\text{ and }\sum_{i=1}^kiv_i=k\right\}=\{v^{n,1},\ldots,v^{n,r^n_k}\},
$$
where $v^{n,j}=(v^{n,j}_0,v^{n,j}_1,\ldots,v^{n,j}_k)\in \N^{k+1}_0$ and $r^n_k$ denotes the number of elements of $R^n_k$,
$$m_{k,j}^n(f):=\frac{1}{v^{n,j}_1! \ldots v^{n,j}_k!}f[1]^{v^{n,j}_1}\ldots f[k]^{v^{n,j}_k},\quad j\in [r^n_k],$$
$$W^n_{k,s}:=\{j\in [r^n_k]:\, \sum_{i=1}^{k}v^{n,j}_i=s\},\quad s\in [k],$$
$$d^{n}_{k,s}(f):=\sum_{j\in W^n_{k,s}}m^n_{k,j}(f).$$
For $\beta_1\geq k_1$ and $\beta_2\geq k_2$, the following holds:

\begin{multline*}
    w_{k_1,k_2}^{\beta_1,\beta_2} =
    \sum_{\alpha_1=k_1}^{\beta_1} \sum_{\alpha_2=k_2}^{\beta_2}
    a_{(\alpha_1,\alpha_2)}
    \left(
        \sum_{v\in R^{\alpha_1}_{k_1}}
        \frac{\alpha_1!}{v_0! \ldots v_{k_1}!} g_1[0]^{v_0} \ldots g_1[k_1]^{v_{k_1}}
        \sum_{v\in R^{\alpha_2}_{k_2}}
        \frac{\alpha_2!}{v_0! \ldots v_{k_2}!} g_2[0]^{v_0} \ldots g_2[k_2]^{v_{k_2}}
    \right)
\end{multline*}

\begin{multline*}
    = \sum_{\alpha_1=k_1}^{\beta_1} \sum_{\alpha_2=k_2}^{\beta_2}
    a_{(\alpha_1,\alpha_2)}
    \left(
        \sum_{j=1}^{r^{\alpha_1}_{k_1}} (b^{1}_{\theta})^{v^{\alpha_1,j}_0}
        \frac{\alpha_1!}{v^{\alpha_1,j}_0!}
        \frac{1}{v^{\alpha_1,j}_1! \ldots v^{\alpha_1,j}_{k_1}!}
        g_1[1]^{v^{\alpha_1,j}_1} \ldots g_1[k_1]^{v^{\alpha_1,j}_{k_1}}
\right) \cdot{} \\
    \left(
        \sum_{j=1}^{r^{\alpha_2}_{k_2}} (b^2_{\theta})^{v^{\alpha_2,j}_0}
        \frac{\alpha_2!}{v^{\alpha_2,j}_0!}
        \frac{1}{v^{\alpha_2,j}_1! \ldots v^{\alpha_2,j}_{k_2}!}
        g_2[1]^{v^{\alpha_2,j}_1} \ldots g_2[k_2]^{v^{\alpha_2,j}_{k_2}}
    \right)=\\
    =\sum_{\alpha_1=k_1}^{\beta_1}\sum_{\alpha_2}^{\beta_2}a_{(\alpha_1,\alpha_2)}\left(\sum_{j=1}^{r^{\alpha_1}_k}(b^1_{\theta})^{v^{\alpha_1,j}_0}\frac{\alpha_1!}{v^{\alpha_1,j}_0!}m^{\alpha_1}_{k_1,j}(g_1)\right)\cdot{}\left(\sum_{j=1}^{r^{\alpha_2}_{k_2}}(b^2_{\theta})^{v^{\alpha_2,j}_0}\frac{\alpha_2!}{v^{\alpha_2,j}_0!}m^{\alpha_2}_{k_2,j}(g_2)\right)=\\
    \sum_{\alpha_1=k_1}^{\beta_1}\sum_{\alpha_2=k_2}^{\beta_2}a_{(\alpha_1,\alpha_2)}\left(\sum_{s_1=1}^{k_1}\left(\sum_{j\in W^{\alpha_1}_{k_1,s_1}}(b^1_{\theta})^{v^{\alpha_1,j}_0}\frac{\alpha_1!}{v^{\alpha_1,j}_0!}m^{\alpha_1}_{k_1,j}(g_1)\right)\right)\cdot{}\left(\sum_{s_2=1}^{k_2}\left(\sum_{j\in W^{\alpha_2}_{k_2,s_2}}(b^2_{\theta})^{v^{\alpha_2,j}_0}\frac{\alpha_2!}{v^{\alpha_2,j}_0!}m^{\alpha_2}_{k_2,j}(g_2)\right)\right)=(\star).
\end{multline*}
Observe that for $\alpha_i\geq k_i$ with $i\in[2]$, the equalities  $W^{\alpha_i}_{k_i,s}=W^{\bar{\alpha}_i}_{k_i,s}$, $m_{k_i,j}^{\alpha_i}(g_i)=m_{k_i,j}^{\bar{\alpha}_i}(g_i)$, $d^{\alpha_i}_{k_i,s}(g_i)=d^{\bar{\alpha}_i}_{k_i,s}(g_i)$ hold whenever $\bar{\alpha_i}\geq \alpha_i$. Consequently, when $\bar{\alpha_i}\geq{\alpha_i}\geq k_i$, we adopt the simplified notation $W_{k_i,s}$, $m_{k_i,j}(g_i)$, $d_{k_i,s}(g_i)$ for every $i\in [2]$ instead.\\
 From the equalities $v^{\alpha_1,j}_0=\alpha_1-v^{\alpha_1,j}_1-\ldots-v^{\alpha_1,j}_{k_1}$ and $v^{\alpha_2,j}_0=\alpha_2-v^{\alpha_2,j}_1-\ldots-v^{\alpha_2,j}_{k_2}$, we obtain
  \begin{multline}\label{eq:lim2}
  (\star)=\sum_{\alpha_1=k_1}^{\beta_1}\sum_{\alpha_2}^{\beta_2}a_{(\alpha_1,\alpha_2)}\left(\sum_{s_1=1}^{k_1}\left(\sum_{j\in W_{k_1,s_1}}(b^1_{\theta})^{v^{\alpha_1,j}_0}\frac{\alpha_1!}{v^{\alpha_1,j}_0!}m_{k_1,j}(g_1)\right)\right)\cdot{}\left(\sum_{s_2=1}^{k_2}\left(\sum_{j\in W_{k_2,s_2}}(b^2_{\theta})^{v^{\alpha_2,j}_0}\frac{\alpha_2!}{v^{\alpha_2,j}_0!}m_{k_2,j}(g_2)\right)\right)=\\
  =\sum_{\alpha_1=k_1}^{\beta_1}\sum_{\alpha_2=k_2}^{\beta_2}a_{(\alpha_1,\alpha_2)}\left(\sum_{s_1=1}^{k_1}\frac{\alpha_1!}{(\alpha_1-s_1)!}(b^1_{\theta})^{\alpha_1-s_1} d_{k_1,s_1}(g_1)\right)\cdot{}\left(\sum_{s_2=1}^{k_2}\frac{\alpha_2!}{(\alpha_2-s_2)!}(b^2_{\theta})^{\alpha_2-s_2} d_{k_2,s_2}(g_2)\right)=\\
  =\sum_{s_1=1}^{k_1}\sum_{s_2=1}^{k_2} s_1!s_2!\left(\sum_{\alpha_1=k_1}^{\beta_1}\sum_{\alpha_2=k_2}^{\beta_2}{\alpha_1 \choose s_1}{\alpha_2 \choose s_2} a_{(\alpha_1,\alpha_2)}(b^1_{\theta})^{\alpha_1-s_1}(b^2_{\theta})^{\alpha_2-s_2}\right )d_{k_1,s_1}(g_1)d_{k_2,s_2}(g_2).
  \end{multline}
  Hence, condition \eqref{comp} is sufficient for the existence of the composition.\\
Next, we prove the necessity part. Assume that the composition exists, so $\lim_{\beta,1,\beta_2 \to \infty}(w_{k_1,k_2}^{\beta1,\beta_2})_{\alpha}<\infty$, for every $\alpha\in {\N^2_0}$. We will carry out the proof using double induction with respect to $k_1$ and $k_2$.\\
Clearly, the base case of the induction holds because, when $k_1=k_2=0$ then the constant term of the composition is well defined if and only if the series converges.\\
Suppose that the equivalence holds for some $k_1,k_2\in \N_0$, which ensures the convergence of the series
$\sum_{\alpha_1=s_1}^{\beta_1}\sum_{\alpha_2=s_2}^{\beta_2}{\alpha_1 \choose s_1}{\alpha_2 \choose s_2} a_{(\alpha_1,\alpha_2)}(b^1_{\theta})^{\alpha_1-s_1}(b^2_{\theta})^{\alpha_2-s_2}$, where $s_i\in{\{0\}\cup{[k_i]}}$ for every $i\in [2]$.

 Let $l_i$ for each $i\in [2]$ denote the least positive integer for which there exists $\alpha\in \hat{\alpha_{l_i}}$ with $g_{i}[l_i]_{\alpha}\neq 0$. Let us fix $m_1,m_2\in \N$. Since $g_1$,$g_2$ are non constant fps it has been proven in \cite{DAP} that for every $i\in [2]$:
$$d_{m_il_i,m_i}(g_i)(x_1,x_2)=\frac{1}{m_i!}g_i[l_i]^{m_i}(x_1,x_2).$$
The induction step is carried out separately for each index. Specifically, for $m_1=k_1+1$ and $\beta_1\geq (k_1+1)l_1$, $m_2=k_2+1$ and $\beta_2\geq (k_2+1)l_2$. Without loss of generality, we will prove the statement for the first index. By \pref{eq:lim2}, we obtain
 \begin{multline*} w^{\beta_1,\beta_2}_{(k_1+1)l_1,k_2}(x_1,x_2)=\sum_{s_1=1}^{(k_1+1)l_1}\sum_{s_2=1}^{k_2} s_1!s_2!\cdot{}\\\left(\sum_{\alpha_1=(k_1+1)l_1}^{\beta_1}\sum_{\alpha_2=k_2}^{\beta_2}{\alpha_1 \choose s_1}{\alpha_2 \choose s_2} a_{(\alpha_1,\alpha_2)}(b^1_{\theta})^{\alpha_1-s_1}(b^2_{\theta})^{\alpha_2-s_2}\right )d_{(k_1+1)l_1,s_1}(g_1)d_{k_2,s_2}(g_2)=\\
 =\sum_{s_1=1}^{k_1+1}\sum_{s_2=1}^{k_2} s_1!s_2!\left(\sum_{\alpha_1=(k_1+1)l_1}^{\beta_1}\sum_{\alpha_2=k_2}^{\beta_2}{\alpha_1 \choose s_1}{\alpha_2 \choose s_2} a_{(\alpha_1,\alpha_2)}(b^1_{\theta})^{\alpha_1-s_1}(b^2_{\theta})^{\alpha_2-s_2}\right )d_{(k_1+1)l_1,s_1}(g_1)d_{k_2,s_2}(g_2)=\\
		 \left[\sum_{s_1=1}^{k_1}\sum_{s_2=1}^{k_2} s_1!s_2!\left(\sum_{\alpha_1=(k_1+1)l_1}^{\beta_1}\sum_{\alpha_2=k_2}^{\beta_2}{\alpha_1 \choose s_1}{\alpha_2 \choose s_2} a_{(\alpha_1,\alpha_2)}(b^1_{\theta})^{\alpha_1-s_1}(b^2_{\theta})^{\alpha_2-s_2}\right )d_{(k_1+1)l_1,s_1}(g_1)d_{k_2,s_2}(g_2)\right]+
\\
\sum_{s_2=1}^{k_2} (k_1+1)!s_2!\left(\sum_{\alpha_1=(k_1+1)l_1}^{\beta_1}\sum_{\alpha_2=k_2}^{\beta_2}{\alpha_1 \choose k_1+1}{\alpha_2 \choose s_2} a_{(\alpha_1,\alpha_2)}(b^1_{\theta})^{\alpha_1-(k_1+1)}(b^2_{\theta})^{\alpha_2-s_2}\right )d_{(k_1+1)l_1,k_1+1}(g_1)d_{k_2,s_2}(g_2)=
\\
	 \left[\sum_{s_1=1}^{k_1}\sum_{s_2=1}^{k_2} s_1!s_2!\left(\sum_{\alpha_1=(k_1+1)l_1}^{\beta_1}\sum_{\alpha_2=k_2}^{\beta_2}{\alpha_1 \choose s_1}{\alpha_2 \choose s_2} a_{(\alpha_1,\alpha_2)}(b^1_{\theta})^{\alpha_1-s_1}(b^2_{\theta})^{\alpha_2-s_2}\right )d_{(k_1+1)l_1,s_1}(g_1)d_{k_2,s_2}(g_2)\right]+	\\
\sum_{s_2=1}^{k_2} s_2!\left(\sum_{\alpha_1=(k_1+1)l_1}^{\beta_1}\sum_{\alpha_2=k_2}^{\beta_2}{\alpha_1 \choose k_1+1}{\alpha_2 \choose s_2} a_{(\alpha_1,\alpha_2)}(b^1_{\theta})^{\alpha_1-(k_1+1)}(b^2_{\theta})^{\alpha_2-s_2}\right )g_1[l_1]^{k_1+1}d_{k_2,s_2}(g_2).
		\end{multline*}
Using the fact that if $\ov{\alpha}\in \hat{\alpha_{l_1}}$ is the unique minimal element of $g_1[l_1]_{\alpha}\neq0$ with respect to the lexicographic ordering. Then  $(g_1[l_1]^{k_1+1})_{(k_1+1)\ov{\alpha}}=g_{1_{\ov{\alpha}}}^{k_1+1}$. Since $g_{1_{\ov{\alpha}}}\neq 0$, and applying the induction hypothesis, we conclude that the limit $\lim_{\beta_1,\beta_2\to \infty} (w^{\beta_1,\beta_2}_{(k_1+1)l_1,k_2})_{(k_1+1)\ov{\alpha}}$ exists by assumption, and all sums
$\sum_{\alpha_1=s_1}^{\beta_1}\sum_{\alpha_2=s_2}^{\beta_2}{\alpha_1 \choose s_1}{\alpha_2 \choose s_2} a_{(\alpha_1,\alpha_2)}(b^1_{\theta})^{\alpha_1-s_1}(b^2_{\theta})^{\alpha_2-s_2}$ converge.
\end{proof}
After establishing the necessary and sufficient conditions for the existence of the composition, it is able to formulate the Chain Rule. However, before proceeding, these definition is needed.

\begin{df}
(\cite{Hunk2019}, Def 4.3) A multi-index sequence $(f_1,f_2,...)$ of lexicographic order fps such that $f_i=\sum_{\alpha}a^i_{\alpha}X^\alpha$, admits addition if for each n-tuple $(\beta_1,...,\beta_n)$ there
exists a positive integer N such that
$$a^{i}_{\alpha}=0,$$
for all $i\geq{N}$ and $0\leq{\alpha_1}\leq{\beta_1},...,0\leq{\alpha_n}\leq{\beta_n}$.
\end{df}

\begin{thm}(Chain Rule for the composition of multivariable fps with a vector of multivariable fps).\\
 Let $f=\sum_{\alpha}a_\alpha X^{\alpha}$  and  $G=(g_1,...,g_n)$ be a vector of fps of n variables over $\mathbb{F}$ ,every component is $g_i=\sum_{\gamma}b_\gamma^iX^{\gamma}$. Then
\begin{equation}
  D_j(f\circ{G})=\sum_{i=1}^{n}{(D_i(f)\circ{G})D_j(g_i)},
  \end{equation}
 provided that the sequence of the series $(G^\alpha:=\prod_{i=1}^n{g_{i}^{\alpha_i}})_{\alpha}$, $\alpha=(\alpha_1,...,\alpha_n)$, admits addition.
\end{thm}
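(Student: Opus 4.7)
The plan is to differentiate the series $f\circ G = \sum_\alpha a_\alpha G^\alpha$ termwise, apply the product rule for $n$ factors to each monomial derivative $D_j(G^\alpha)$, and then reindex the outcome so as to recognize it as $\sum_{i=1}^n (D_i(f)\circ G)\,D_j(g_i)$. The admissibility hypothesis on $(G^\alpha)_\alpha$ plays, in the formal setting, the role that absolute convergence plays in analysis: every coefficient of $f\circ G$ is determined by a finite subsum, so $D_j$ can be brought inside the outer summation.

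First I would verify that the sequence $(a_\alpha D_j(G^\alpha))_\alpha$ also admits addition. This is immediate from the identity $(D_j(G^\alpha))_\beta = (\beta_j+1)(G^\alpha)_{\beta+e_j}$, which transfers admissibility at the shifted index $\beta+e_j$ to admissibility at $\beta$. From this it follows that
\[
D_j(f\circ G) = \sum_\alpha a_\alpha D_j(G^\alpha).
\]
Next, the $n$-fold product rule proved earlier in the paper, together with $D_j(g_i^{\alpha_i}) = \alpha_i g_i^{\alpha_i-1}D_j(g_i)$ (with the $\alpha_i=0$ summands interpreted as zero), gives
\[
D_j(G^\alpha) = \sum_{i=1}^n \alpha_i\, g_i^{\alpha_i-1}\, D_j(g_i) \prod_{l\neq i} g_l^{\alpha_l}.
\]
Substituting this in and swapping the finite sum over $i$ with the outer sum over $\alpha$ reduces the theorem to checking, for each fixed $i$, the identity
\[
\sum_\alpha \alpha_i\, a_\alpha\, g_i^{\alpha_i-1}\prod_{l\neq i} g_l^{\alpha_l} = D_i(f)\circ G.
\]

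This last equality is pure reindexing. From $D_i(f) = \sum_\alpha (\alpha_i+1)a_{\alpha+e_i}X^\alpha$ and the definition of composition, $D_i(f)\circ G = \sum_\alpha (\alpha_i+1)a_{\alpha+e_i}G^\alpha$. Setting $\beta = \alpha+e_i$, the right-hand side becomes $\sum_{\beta:\,\beta_i\geq 1} \beta_i\, a_\beta\, g_i^{\beta_i-1}\prod_{l\neq i}g_l^{\beta_l}$, which coincides with the displayed sum (the missing $\beta_i=0$ terms carry a factor of $\beta_i=0$ and contribute nothing). Multiplying by $D_j(g_i)$ and summing over $i\in[n]$ yields the desired chain rule.

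I expect the main obstacle to be bookkeeping rather than algebra. The computation is a direct formal analogue of the classical chain rule derivation and each algebraic step is forced by the product rule and by the definition of $D_i$; the real work lies in confirming at every stage that the series appearing in the argument --- $D_j(f\circ G)$, each $D_i(f)\circ G$, and each product $(D_i(f)\circ G)\,D_j(g_i)$ --- is a well-defined formal power series. All of these reductions amount to tracking how the admissibility hypothesis on $(G^\alpha)_\alpha$ propagates under the shifts $\beta\mapsto \beta+e_j$ and $\alpha\mapsto \alpha+e_i$, so the argument should go through once this bookkeeping is made explicit.
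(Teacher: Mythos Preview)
Your proposal is correct and follows essentially the same route as the paper: differentiate the defining series termwise (justified by the admissibility hypothesis), apply the $n$-fold product rule together with $D_j(g_i^{\alpha_i})=\alpha_i g_i^{\alpha_i-1}D_j(g_i)$, and reindex $\alpha\mapsto\alpha+e_i$ to recognize $D_i(f)\circ G$. Your additional remarks on why the derived sequence $(a_\alpha D_j(G^\alpha))_\alpha$ still admits addition and on the well-definedness of the intermediate series are more explicit than the paper's treatment, but the underlying argument is the same.
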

\begin{proof}
Denote for every $\gamma_i\in{}\N$,  $g_i^{\gamma_i}=\sum_{\gamma}b_{\gamma}^{i^{(\gamma_i)}}X^{\gamma}$. Then\\
$G^\alpha=\prod_{i=1}^n{g_{i}^{\alpha_i}}$ (Def.14).\\
By Lemma 2.9 of \cite{DAP}, we obtain
$$G^\alpha=\prod_{i=1}^n{\sum_{\gamma}[{\sum_{\beta_1,...,\beta_{\alpha_i}\in{\N_0^n}\;: \beta_1+\cdots+\beta_{\alpha_i}=\gamma}b_{\beta_1}^i\cdots{}b_{\beta_{\alpha_i}}^{i}]X^{\gamma}}}.$$

Let us denote $G^\alpha=\sum_{\gamma}{c^{\alpha}_{\gamma}X^\gamma}$.\\
By the definition of the formal composition f with G, we get
$$f\circ{G}=\sum_{\alpha}a_\alpha G^{\alpha}=\sum_{\alpha}a_\alpha \sum_{\gamma}{c^{\alpha}_{\gamma}X^\gamma}.$$
By Theorem 4.4 of \cite{Hunk2019}, because that the sequence of series $(G^{\alpha})_{\alpha\in{\N_0^{n}}}$ admits addition, then the derivative of the series is the sum of the derivatives. Therefore
$$D_{j}(f\circ{G})=\sum_{\alpha}a_\alpha \sum_{\gamma}{c^{\alpha}_{\gamma+e^j}(\gamma_j+1)X^\gamma}=\sum_{\alpha}a_{\alpha}D_{j}(G^\alpha).$$

According to the definition of the formal partial derivative, we obtain

$$D_{i}(f)=\sum_{\alpha}{(\alpha_i+1)a_{\alpha+e_i}X^\alpha}.$$
However,
$$D_{i}(f)\circ{G}=\sum_{\alpha}{(\alpha_i+1)a_{\alpha+e_i}G^\alpha},$$
and
$$D_j(g_{i})=\sum_{\alpha}{(\alpha_j+1)b_{\alpha+e_j}^iX^\alpha}.$$
If we examine the left-hand side of (7) and applying Th.11, we get
$$\sum_{\alpha}a_{\alpha}D_{j}(G^\alpha)=\sum_{\alpha}a_{\alpha}[\sum_{i=1}^n{D_j(g_i^{\alpha_i})\prod_{k\neq{i}}g^{\alpha_k}_k}]=\sum_{i=1}^n[\sum_{\alpha}a_{\alpha}{D_j(g_i^{\alpha_i})\prod_{k\neq{i}}g^{\alpha_k}_k}].$$
Applying the third property of Th.10, we obtain
$$\sum_{i=1}^n[\sum_{\alpha}a_{\alpha}{D_j(g_i^{\alpha_i})\prod_{k\neq{i}}g^{\alpha_k}_k}]=\sum_{i=1}^n[\sum_{\alpha=(\alpha_1,...,\alpha_n)}\alpha_ia_{\alpha}{g^{\alpha_i-1}D_j(g_i)\prod_{k\neq{i}}g^{\alpha_k}_k}].$$
Substituting $\alpha=\alpha+e_i$, we get
$$\sum_{i=1}^n[\sum_{\alpha}\alpha_ia_{\alpha}{g^{\alpha_i-1}D_j(g_i)\prod_{k\neq{i}}g^{\alpha_k}_k}]=\sum_{i=1}^n[\sum_{\alpha}(\alpha_i+1)a_{\alpha+e_i}{g^{\alpha_i}D_j(g_i)\prod_{k\neq{i}}g^{\alpha_k}_k}].$$
However, we have
$$\sum_{i=1}^n[\sum_{\alpha}(\alpha_i+1)a_{\alpha+e_i}{g^{\alpha_i}D_j(g_i)\prod_{k\neq{i}}g^{\alpha_k}_k}]=\sum_{i=1}^n[\sum_{\alpha}(\alpha_i+1)a_{\alpha+e_i}G^{\alpha}D_j(g_i)],$$
which leads to the equality
$$\sum_{i=1}^n[\sum_{\alpha}(\alpha_i+1)a_{\alpha+e_i}G^{\alpha}D_j(g_i)]=\sum_{i=1}^{n}{(D_i(f)\circ{G})D_j(g_i)}.$$
\end{proof}
\begin{df}
(\cite{Mat97})
The formal Jacobian matrix of $G=(g_1,...,g_n)$ where $g_i$ are fps, with respect to $x_1, . . . ,x_n$ is defined to be the $n\times{n}$ matrix whose $(i, j )$-entry is $\frac{\partial{g_i}}{dx_j}$, denoted by
\begin{equation}
J_G(x_1,...,x_n):=\frac{\partial{G}}{\partial X}=\frac{\partial{(g_1,...,g_n)}}{\partial(x_1,...x_n)}=[D_{j}(g_{i})]_{1\leq{i},j\leq{n}}:=[\frac{\partial{g_i}}{\partial{x_j}}]_{1\leq{i},j\leq{n}}.
\end{equation}
\end{df}
\begin{rmk}
Let f be a fps of n variables and G be a vector of n fps of n variables, such that $f\circ{G}$ exists. By the Chain Rule, we get:
$$J_{f\circ{G}}=J_{f}(G)\cdot{}J_{G}.$$
\end{rmk}

\section{Further work}
Composition of formal power series is fundamental to the structure and definition of Riordan matrices, serving as the primary operation that governs their behavior. This operation not only underpins the matrix entries themselves, but also dictates how Riordan matrices interact under multiplication and inversion.\\
The following theorem establishes the existence and sufficient conditions for the composition inverse. While this result is widely recognized (see \cite{haz} and \cite{arn}). The proof presented here offers a more concise and simple demonstration.
\begin{thm}
Let $G=(g_1,...,g_n)$ be a vector of nonunit fps of n variables over $\mathbb{F}$, $g_i=\sum_{\alpha}a^i_{\alpha}X^{\alpha}$.
Then $G$ has a unique composition inverse $G^{-1}=(g^{-1}_1,...,g^{-1}_n)$ where $g^{-1}_i=\sum_{\alpha}b^i_{\alpha}X^{\alpha}$ for every $1\leq{}i\leq{n}$, if and only if $det(J_{G}(\theta))\neq{0}.$
\end{thm}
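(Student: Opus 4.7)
The plan is to handle necessity with a one-line Chain Rule argument, then prove sufficiency by constructing the right inverse coefficient-by-coefficient via induction on the degree, and finally upgrade to a two-sided inverse by a standard associativity argument.

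For necessity, assume $G^{-1}$ exists. Comparing constant terms in $G\circ G^{-1}=I$ and using $G(\theta)=\theta$ (which holds because $G$ is nonunit) forces $G^{-1}(\theta)=\theta$, so $G^{-1}$ is also nonunit. Applying the Jacobian form of the Chain Rule (Remark 18) componentwise to $G\circ G^{-1}$ gives $J_{G\circ G^{-1}}=(J_G\circ G^{-1})\cdot J_{G^{-1}}$. Evaluating at $\theta$ yields $I_n=J_G(\theta)\cdot J_{G^{-1}}(\theta)$, so $\det J_G(\theta)\neq 0$.

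For sufficiency, assume $\det J_G(\theta)\neq 0$ and seek nonunit components $g^{-1}_j=\sum_\alpha b^j_\alpha X^\alpha$ satisfying $g_i\circ G^{-1}=x_i$ for every $i\in[n]$. Since the $g^{-1}_j$ are nonunit, each composition $g_i\circ G^{-1}$ exists trivially, as the criterion of Theorem 13 collapses to a finite sum for every coefficient. I determine the $b^j_\alpha$ by induction on $|\alpha|$, starting from $b^j_\theta=0$. Assume all $b^j_\beta$ with $|\beta|<k$ are known and fix $\alpha\in\hat{\alpha}_k$. Expanding $g_i\circ G^{-1}=\sum_\gamma a^i_\gamma(G^{-1})^\gamma$, the product $(g^{-1}_1)^{\gamma_1}\cdots(g^{-1}_n)^{\gamma_n}$ has lowest degree $|\gamma|$, and its degree-$k$ part is a sum of products of $|\gamma|$ coefficients $b^j_{\beta_\ell}$ with each $|\beta_\ell|\geq 1$ and $\sum_\ell|\beta_\ell|=k$. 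Such a product can contain an unknown degree-$k$ coefficient $b^j_\alpha$ only when $|\gamma|=1$, i.e.\ $\gamma=e_j$; every other contribution involves only previously determined $b^j_\beta$ with $|\beta|<k$. Equating the $\alpha$-coefficients of $g_i\circ G^{-1}$ and $x_i$ therefore yields, for each $\alpha\in\hat{\alpha}_k$, the linear system
\[
\sum_{j=1}^n a^i_{e_j}\,b^j_\alpha = c_{i,\alpha},\qquad i\in[n],
\]
whose right-hand side $c_{i,\alpha}$ depends only on the $a^i_\gamma$ and the already-determined $b^j_\beta$. The coefficient matrix is $J_G(\theta)$, invertible by hypothesis, so the system has a unique solution. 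This completes the induction and yields a unique nonunit $G^{-1}$ with $G\circ G^{-1}=I$.

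The main obstacle is upgrading this right inverse to a two-sided one. I would do this by applying the same construction to $G^{-1}$ itself: since $J_{G^{-1}}(\theta)=J_G(\theta)^{-1}$ is invertible, $G^{-1}$ admits a unique nonunit right inverse $H$ with $G^{-1}\circ H=I$. Associativity of composition for nonunit vectors of multivariable fps---valid because all relevant coefficient sums remain finite---then gives
\[
H=I\circ H=(G\circ G^{-1})\circ H=G\circ(G^{-1}\circ H)=G\circ I=G,
\]
whence $G^{-1}\circ G=G^{-1}\circ H=I$. Thus $G^{-1}$ is a two-sided inverse, with uniqueness inherited from the unique solvability of each linear system in the inductive construction.
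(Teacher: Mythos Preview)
Your proof is correct and follows the same overall arc as the paper's---necessity via the degree-one Jacobian identity, sufficiency via coefficient comparison, uniqueness via associativity---but your execution differs in two places worth noting. First, for necessity you invoke the Chain Rule (Remark~18) to obtain $J_G(\theta)J_{G^{-1}}(\theta)=I_n$, whereas the paper reads this identity off directly by comparing the $\hat\alpha_1$-block of $G^{-1}\circ G=I$; both are one-liners. Second, and more substantively, for sufficiency you solve $G\circ G^{-1}=I$ and organize the induction so that at each degree $k$ and each fixed $\alpha\in\hat\alpha_k$ the unknowns $(b^1_\alpha,\ldots,b^n_\alpha)$ satisfy an $n\times n$ system with coefficient matrix exactly $J_G(\theta)$; this makes unique solvability immediate. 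The paper instead solves $G^{-1}\circ G=I$ and, for $j\geq 2$, asserts that the resulting systems in the $b^i_\alpha$ are homogeneous with the trivial solution---an argument that is at best opaque, since those systems are in fact inhomogeneous once the lower-degree $b^i_\beta$ are fixed. Your formulation is therefore both cleaner and more transparent about why the Jacobian hypothesis is exactly what is needed at every step. Your explicit right-to-two-sided upgrade via associativity matches the paper's appeal to group-theoretic uniqueness and is fine as written.
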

\begin{proof}
Let us examine the following formula:
$$G^{-1}\circ{G}=(g^{-1}_1\circ{G},g^{-1}_2\circ{G},\cdots{},g^{-1}_n\circ{G})=I=(x_1,x_2,...,x_n).$$
The above formula holds for every $1\leq{}i\leq{n}$, that is $g^{-1}_i\circ{G}=x_i$, so
\begin{equation}
g^{-1}_i\circ{G}=\sum_{\alpha}b^i_{\alpha}G^{\alpha}=b^i_{\theta}+b^i_{e_1}g_1+b^i_{e_2}g_2+...+b^i_{e_n}g_n+\sum_{\alpha:\;|\alpha|\geq{2}}b^i_{\alpha}G^{\alpha}=x_i.
\end{equation}
By comparing the constant term, we get that $b^i_{\theta}=0$ for every $i\in{\{1,...,n\}}$.\\
If we compare the coefficients which indices belonging to $\hat{\alpha}_1$, we get a linear system of n equations as follows:
\begin{equation}\label{eqn:cndtns}\left\{\begin{array}{l}a^1_{e_1}b^{1}_{e_1}+a^1_{e_2}b^{2}_{e_1}+\ldots+a^1_{e_n}b^{n}_{e_1}=1\\a^2_{e_1}b^{1}_{e_2}+a^2_{e_2}b^{2}_{e_2}+\ldots+a^2_{e_n}b^{n}_{e_2}=1\\\vdots
\\a^n_{e_1}b^{1}_{e_n}+a^n_{e_2}b^{2}_{e_n}+\ldots+a^n_{e_n}b^{n}_{e_n}=1\end{array}\right.,\end{equation}\\
which is equivalent to the following system:
\[
\begin{bmatrix}
a^1_{e_1}  & a^1_{e_2} & \cdots & a^1_{e_n}\\
a^2_{e_1} & a^2_{e_2} & \cdots & a^2_{e_n}\\
\vdots & \vdots & \ddots & \vdots\\
a^n_{e_1} & a^n_{e_2} & \cdots & a^n_{e_n}
\end{bmatrix}
\cdot
\begin{bmatrix}
b^1_{e_1}  & b^1_{e_2} & \cdots & b^1_{e_n}\\
b^2_{e_1} & b^2_{e_2} & \cdots & b^2_{e_n}\\
\vdots & \vdots & \ddots & \vdots\\
b^n_{e_1} & b^n_{e_2} & \cdots & b^n_{e_n}
\end{bmatrix}
=
\begin{bmatrix}
1 & 0 & \cdots & 0\\
0 & 1 & \cdots & 0\\
\vdots & \vdots & \ddots & \vdots\\
0 & 0 & \cdots & 1
\end{bmatrix}.
\]

Notice that the two matrices on the left hand side are $J_G(\theta)\cdot{}J_{G^{-1}}(\theta)$. Therefore, we obtain  $det(J_G(\theta)\cdot{}J_{G^{-1}}(\theta))=det(J_G(\theta))\cdot{}det(J_{G^{-1}}(\theta))=1$.\\
Hence, $J_G(\theta)\neq{0}$ is a necessary condition for the existence of the inverse composition.\\
In general, if we compare the coefficients which indices belonging to $\hat{\alpha}_j$, where $j\geq{2}$, we get for every $1\leq{}i\leq{n}$, homogeneous systems of  linear equations in which the unknowns are
$b^{i}_{\alpha}$. So, it possesses at least one solution that is called a trivial solution where the value of each coefficient is $0$. Then, indeed the composition inverse exists.\\
Using the fact that the composition is associative (see Prop 4.1.4 of \cite{TB}), then we can consider the set of all vectors of fps of n variables, each having a composition inverse. It forms a group under composition by
the
definition. Then by (\cite{Jhon} Prop 3.1.2 (b)), the inverse is unique.\\
\end{proof}
In (\cite{SH}, p.232), it was stated that the above theorem holds if and only if $g_i(\theta)=0$ for all $1\leq{}i\leq{n}$ and $det(J_{G}(\theta))\neq{0}$. However, this statement is false. Let us consider a
counterexample as follows:\\
\begin{exam}
Given $G(x_1,x_2)=(1+x_1+x_2,1+x_1-x_2)$. So, ${g_1}(x_1,x_2)=1+x_1+x_2$ and ${g_2}(x_1,x_2)=1+x_1-x_2$.\\
  Notice that $G^{-1}(x_1,x_2)=(-1+x_1/2+x_2/2 , x_1/2-x_2/2)$  is the composition inverse of $G$. That is:
    $$g^{-1}_1\circ{G}=-1+(1+x_1+x_2)/2+(1+x_1-x_2)/2=x_1,$$
    and
    $$g^{-1}_2\circ{G}=(1+x_1+x_2)/2-(1+x_1-x_2)/2=x_2.$$
\end{exam}
\begin{cor}
The existence of a composition inverse does not necessarily apply exclusively to nonunit fps vectors.
\end{cor}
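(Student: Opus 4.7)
The plan is to deduce the corollary immediately from the preceding Example 20. The statement is a non-exclusivity assertion, so it suffices to exhibit a single unit fps vector that admits a composition inverse. Example 20 already provides such a witness: the vector $G(x_1,x_2)=(1+x_1+x_2,\,1+x_1-x_2)$ has $g_1(\theta)=g_2(\theta)=1\neq 0$, so $G$ is a vector of unit fps (it fails the hypothesis of Theorem 19), yet the computation displayed in the example verifies $g_i^{-1}\circ G = x_i$ for $i=1,2$ with the explicit inverse $G^{-1}(x_1,x_2)=(-1+x_1/2+x_2/2,\,x_1/2-x_2/2)$. Thus $G$ is a unit fps vector that nevertheless possesses a composition inverse, and the corollary follows at once.

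To emphasize that this is not an isolated accident but a general phenomenon, I would include a short remark indicating a whole family of such vectors. For any invertible matrix $A\in\mathbb{F}^{n\times n}$ and any nonzero $c\in\mathbb{F}^n$, the affine fps vector $G(X)=AX+c$ is a unit, and a direct substitution shows that $G^{-1}(X)=A^{-1}X-A^{-1}c$ is its composition inverse (both components of which are polynomials, hence fps). Example 20 is precisely the case $n=2$, $c=(1,1)$, and $A$ the matrix with rows $(1,1)$ and $(1,-1)$. This confirms the existence of arbitrarily many unit vectors admitting composition inverses.

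There is no genuine obstacle: the content of the corollary is fully captured by Example 20, the only verification needed is the routine check that $G^{-1}\circ G = I$, which the example has already carried out in closed form. The corollary should therefore be stated as a one-line consequence of Example 20, with an optional remark about the affine family above if the author wishes to convey that the phenomenon is widespread rather than exceptional.
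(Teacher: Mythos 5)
Your proposal is correct and matches the paper's approach exactly: the corollary is an immediate consequence of Example 20, which exhibits the unit vector $G(x_1,x_2)=(1+x_1+x_2,\,1+x_1-x_2)$ together with its explicit composition inverse. Your additional remark about the general affine family $G(X)=AX+c$ is a harmless (and accurate) strengthening, but it is not needed and does not change the argument.
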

For future work, a natural question is how to extend the notion of Riordan matrices to include all unit fps, even in the one variable case. While it is known that the set of Riordan matrices forms a group under multiplication, extending this structure to arbitrary unit fps remains mysterious, as the associative property fails in general for such an extension.

\begin{bibdiv}
\begin{biblist}

\bib{Gan1}{article}{
  author = {Gan,Xiao-Xiong},
  title = {A Generalized Chain Rule for formal power series},
  journal = {Communications in Mathematical Analysis},
  volume = {2},
  number = {1},
  pages = {37--44},
  year = {2007}
}
\bib{DAP}{article}{
 author = {Bugajewski, Dariusz},
  author = { Galimberti, Alessia  },
    author ={Maćkowiak, Piotr}
  title = {On composition and Right Distributive Law for formal power series of multiple variables},
  journal = {arXiv preprint arXiv:2211.06879},
  year = {2022}
}

\bib{GanKnox}{article}{
 author = { Gan, Xiao-Xiong},
 author={ Knox},
  title = {On composition of formal power series},
  journal = {International Journal of Mathematics and Mathematical Sciences},
  volume = {30},
  pages = {761--770},
  year = {2002}
}

\bib{Hunk2019}{article}{
  title={Formal power series in several variables},
  author={Haukkanen, Pentti},
  journal={Notes on Number Theory and Discrete Mathematics},
  volume={25},
  number={4},
  pages={44--57},
  year={2019}
}

\bib{Mat97}{book}{
  author = {Mathai, Arakaparampil M.},
  title = {Jacobians of matrix transformations and functions of matrix argument},
  year = {1997},
  publisher = {World Scientific}
}

\bib{SH}{book}{
   author = {S.Louis}
   author={S.Renzo}
   author={ B.Paul}
   author={ C. Gi-Sang}
   author={ H.Tian-Xiao}
   author={ M. Donatella}
   author={W. Weiping},
  title = {The Riordan group and applications},
  year = {2022},
  publisher = {Springer}
}
\bib{S}{book}{
 author = {Stanley, Richard P.},
  title = {Enumerative Combinatorics},
  series = {Cambridge Studies in Advanced Mathematics},
  volume = {49},
  publisher = {Cambridge University Press},
  year = {1997}
}

\bib{GanM2}{book}{
 author = {Gan, Xiao-Xiong},
  title = {Formal Analysis. An Introduction},
  series = {De Gruyter Studies in Mathematics},
  volume = {80},
  publisher = {De Gruyter},
  address = {Berlin},
  year = {2021}
}

\bib{JCP}{article}{
  title={On the recursive and explicit form of the general JCP Miller formula with applications},
  author={Bugajewski, Dariusz },
  author={ Bugajewski, Dawid},
  author={ Gan, Xiao-Xiong},
  author={ Ma{\'c}kowiak, Piotr},
  journal={Advances in Applied Mathematics},
  volume={156},
  pages={102688},
  year={2024},
  publisher={Elsevier}
}
\bib{haz}{book}{
  title={Formal groups and applications},
  author={Hazewinkel, Michiel},
  volume={78},
  year={1978},
  publisher={Elsevier}
}

\bib{ber}{book}{
  title={Universal algebra: Fundamentals and selected topics},
  author={Bergman, Clifford},
  year={2011},
  publisher={Chapman and Hall/CRC}
}
\bib{ode}{book}{
  title={Unique summing of formal power series solutions to advanced and delayed differential equations},
  author={Pravica, David W }
  author={Spurr, Michael J},
  booktitle={Conference Publications},
  volume={2005},
  number={Special},
  pages={730--737},
  year={2005},
  organization={Conference Publications}
}
\bib{Jhon}{book}{
  title={Abstract algebra},
  author={Beachy, John A}
  author={Blair, William D},
  year={2019},
  publisher={Waveland Press}
}
\bib{TB}{book}{
  title={Algebraic properties of formal power series composition},
  author={Brewer, Thomas Scott},
  year={2014},
  publisher={University of Kentucky}
}
\bib{haz}{book}{
  title={Formal groups and applications},
  author={Hazewinkel, Michiel},
  volume={78},
  year={1978},
  publisher={Elsevier}
}
\bib{arn}{book}{
  title={Polynomial Automorphisms and the Jacobian Conjecture},
  author={Arno van den Essen},
  year={2000},
  publisher={Birkhauser}
}

\end{biblist}
\end{bibdiv}

\end{document}